\documentclass[11pt]{article}

\usepackage{amsmath,amssymb,amsthm,mathtools}
\usepackage{booktabs,longtable,array}
\usepackage[margin=1in]{geometry}
\usepackage{xcolor}
\usepackage{listings}
\usepackage[hidelinks]{hyperref}
\usepackage{hyperref}

\newtheorem{proposition}{Proposition}[section]
\newtheorem{lemma}[proposition]{Lemma}
\theoremstyle{remark}
\newtheorem{remark}[proposition]{Remark}

\newcommand{\conv}{*}
\newcommand{\dd}{\,d}
\newcommand{\abs}[1]{\lvert#1\rvert}

\newcommand{\code}[1]{\texttt{#1}}
\newcommand{\Tmark}{\textnormal{\textsc{T}}}
\newcommand{\Cmark}{\textnormal{\textsc{C}}}
\newcommand{\Cpmark}{\textnormal{\textsc{Cp}}}
\newcommand{\Csmark}{\textnormal{\textsc{Cs}}}
\newcommand{\hash}[2]{\texttt{#1}\newline\texttt{#2}}
\newcolumntype{L}[1]{>{\raggedright\arraybackslash}p{#1}}
\definecolor{codegray}{rgb}{0.96,0.96,0.96}
\title{Computer-assisted Proof Under Audit:\\
Typos, Certificate Errors, and Reproducible Exact Checks\\
for a Symbolic Invertibility Proof}
\author{Fan Zheng}

\begin{document}
\maketitle

\begin{abstract}
We audit the exact-arithmetic certificate in arXiv:2310.19781v2, its 2025
\emph{Communications in Mathematical Physics} version, and their shared
MATLAB archive.  CMP corrects none of the audited items; we therefore treat
both versions together and use the openly accessible arXiv source for exact
provenance.  We found 11 proof-affecting defects (alongside typographical slips)
and give reproducible exact counterexamples to several implemented bounds and
reconstruction steps.  To the best of our knowledge, this is the first
independent, version-pinned, source-level audit of a published computer-assisted
proof (CAP) in analysis to identify multiple proof-affecting defects in its
released computational certificate.  The findings do not refute the intended theorem or its analytic
reduction, but they show that the published certificate does not prove the
claimed conclusion.  Future work will give a corrected, structurally simpler
proof with substantially less machine assistance.
\end{abstract}

\begin{center}
\begin{minipage}{0.91\textwidth}
\small
\textbf{Declaration of AI use.}
The preparation of this note used OpenAI GPT-5.6 Sol through Codex, accessed in
August 2026, for literature survey, source
navigation, comparison of formulas and code, exact-arithmetic scripts,
drafting, and \LaTeX{} editing.  Model output was not treated as mathematical
evidence: each reported item is verified against a pinned source location and, where
numerical, against a reproducible exact check.  The human author takes
responsibility for the final text and its mathematical claims.

\textbf{Acknowledgements.} The author is supported by Ram\'on y Cajal grant
RYC2022-035363-I. He is also grateful to the Feishu Institute for hosting
his visit there.
\end{minipage}
\end{center}

\tableofcontents
\clearpage

\section{Introduction}
\label{sec:introduction}

\paragraph{What is a computer-assisted proof?}
A computer-assisted proof (CAP) is a proof whose conclusion depends on at
least one claim established by a machine-executed computation: for example,
an exhaustive search, an exact symbolic calculation, a rigorous interval
enclosure, or the checking of a proof object.  The output is not
self-authenticating.  The proof includes the
whole chain from mathematical statement to computational specification, from
specification to implementation, and from output back to conclusion.  Because
the computation is indispensable, checking only the prose does not check the
whole proof.

Questions of surveyability, trust, rigorous numerics, and reproducibility are
developed in a substantial literature
\cite{Tymoczko1979,Lanford1987,KochSchenkelWittwer1996,Tucker2005,
Neumaier2002,Rump2010,vanDenBergLessard2015,GomezSerrano2019,
RevolTheveny2014}.

\paragraph{What is an audit?}
An audit independently examines that chain for a pinned paper version and set
of artifacts.  It is more than a rerun, which can faithfully reproduce the
same defect.  A source-level audit asks whether the code computes the quantity
defined in the paper, whether its predicate implies the stated inequality,
whether the required domain is covered, and whether errors and dependencies
are propagated correctly.  It also separates a printed slip already treated
correctly by the executed argument from a defect on which the certificate
depends.  Formal proof offers a complementary model in which an explicit proof
object can reduce the implementation-level trust boundary
\cite{AvigadHarrison2014,HalesEtAl2017}.

\paragraph{Why audit?}
An audit presupposes neither success nor failure; it tests whether the released
evidence establishes the claim.  If it confirms the proof, it replaces
untested confidence by independently checked support within its scope.  If it
reveals errors, it locates where the certificate fails and what must be
repaired.  Both outcomes are valuable in different ways: confirmation
strengthens warranted confidence, while error detection enables correction.
Here the audit finds multiple proof-relevant defects despite exact arithmetic
used in the certificate.

\paragraph{Earlier audits and the novelty here.}
The contribution here is a version-pinned, source-level audit that aligns a
published argument in analysis with its released symbolic certificate and
later refereed version, separates print-only slips from proof-affecting
defects, and supplies independently checkable witnesses for the latter.

Independent scrutiny has important precedents.  Schmidt's 1982 partial audit
of the Appel--Haken proof found mishandled and possibly omitted cases, some
requiring changes to the discharging procedure \cite{Schmidt1982}.  Later work
provided an author-led repair of the Kepler proof, partially audited Flyspeck,
formally recertified Tucker's Lorenz computation, and revisited Lam's problem
with checkable SAT certificates
\cite{HalesRevision2010,Adams2016,Immler2018,BrightEtAl2021}.  The last study
found inconsistent enumeration counts while preserving the nonexistence
conclusion.  Most adverse precedents located in our search concern
finite-enumeration or hybrid certificates; Immler's analytic precedent
recertified the Lorenz computation without reporting multiple defects in its
released certificate.  Among
the independent audits located in our search, this appears to be the first of
this kind to identify multiple proof-affecting defects in a published
computational certificate supporting a theorem in analysis, thus extending
the error-detection capability of the existing CAP auditing methodology
to a new field where it has great potential applications.

\paragraph{Combinatorial and analytic/numerical audits.}
They share the same standard of rigor, with emphasis on different aspects.
A combinatorial audit emphasizes faithful encoding, exhaustive case
or symmetry coverage, and a sound discrete checker.  In an
analytic/numerical CAP, finite data often certify a claim about a continuous or
infinite-dimensional object.  Its audit may additionally have to control
domain coverage, discretization and truncation, tails, directed enclosure or
order propagation, and the translation of formulas into code.  Merely
confirming that the program ran or that its integer arithmetic was internally
consistent does not check those interfaces
\cite{BrightEtAl2021,vanDenBergLessard2015,GomezSerrano2019,Immler2018}.

\paragraph{Why CAP audits require a stringent standard.}
The validity standard is the same for every proof, but CAPs warrant especially
explicit standards of traceability, reproducibility, and independent checking.
A local gap in a traditional proof may still leave its conceptual argument viable
and admit a transparent repair.  One wrong computed endpoint can instead
invalidate every dependent enclosure.  Numerical tolerance is legitimate only
when enclosed and propagated; exact fractions remove roundoff, interval
arithmetic absorbs only declared uncertainty, and hashes identify an artifact.
None repairs a wrong constant, an omitted region, a reversed inequality, or a
broken implication.

This enlarged interface also broadens the referee's burden.  Computer output
is often expected to be reliable, yet CAP defects can be harder to detect than
a line-level gap.  Review may require simultaneous command of the mathematics,
numerical analysis, interval semantics, programming language, and toolchain,
placing one referee near the practical limit of what a person can survey.
Reproducible artifacts, independent implementations, and formal proof objects
can reduce or distribute that burden but cannot erase the trust boundary
\cite{Lanford1987,KochSchenkelWittwer1996,RevolTheveny2014,
AvigadHarrison2014,HalesEtAl2017}.

\paragraph{Why audit this certificate.}
The audited certificate fits in the broader and fashionable program on singularity
formation in incompressible fluids.  It is intended to prove invertibility,
modulo a one-dimensional kernel, for a linearized operator around an
approximate self-similar Boussinesq profile, an input to the companion analysis
of the blow-up of two-dimensional Boussinesq and three-dimensional Euler equations
\cite{EP-main,EP-symbolic}.  Thus the certificate plays a consequential analytic
role in the whole proof, yet Its computational scope remains confined and focused.

\paragraph{The present audit.}
The audit is forensic but constructive: it separates evident printed slips
from defects that invalidate an executed check or required implication, gives
short exact counterchecks, and traces downstream dependencies.  Publication in
CMP changed none of the audited items and supplied no replacement companion
file.  Our conclusions concern this specified symbolic certificate, not the
analytic reduction or the truth of the intended theorem; the aim is to state
what must change for itt to become a rigorous proof.

\section{Scope, version, and terminology}
\label{sec:scope}

The audited article exists in two forms: \cite{EP-symbolic,EP-CMP}:
arXiv:2310.19781v2 [math.AP], submitted on 8 April 2025, and the subsequent
2025 \emph{Communications in Mathematical Physics} version of record.  We
checked the complete PDFs of both versions.  The journal article corrects none
of the mathematical items audited in v2: it retains the relevant printed
formulas and identifiers and continues to rely on the same arXiv companion
files without providing a replacement code archive.  The mathematical
findings are therefore stated for v2 and CMP together.

Detailed page, equation, source-line, and code-line locations remain pinned to
v2 because the executable source provenance is the unmodified archive
distributed with that version.  This choice also keeps the audit readily
accessible: the arXiv PDF and source archive are openly available, whereas
access to the complete publisher version may depend on a reader's subscription
or institution.  Unless explicitly marked ``CMP,'' page and equation numbers
refer to the v2 PDF; the corresponding CMP locations are collected in
Section~\ref{sec:cmp-check}.  Source-line numbers are one-based physical lines
in the pinned arXiv archive.  The archive and the two PDFs used for this audit
have the following SHA--256 digests:
\begin{description}
\small
\item[Source archive.]
\nolinkurl{748370E0C93484190D777EDC35DFCE0CE4747787B0EBFB425C72F0C8F83851FE}
\item[v2 PDF.]
\nolinkurl{126D4D004F13C4107B1D69E800CDD3DD98282CB6F6042B207FFF0FB9FF43AC9C}
\item[CMP version-of-record PDF.]
\hash{DD7EB8017F4064987F5300FC405657677}{3501647B8F8C16C4B484AB5367391A0}
\end{description}

We use two classifications.
\begin{description}
\item[Printed typo (\Tmark).]
The printed formula is wrong or inconsistent, but the intended correction is
already used by the surrounding proof or by the released code, and no
independent numerical certificate relies on the misprint.
\item[Certificate error (\Cmark).]
The released proof or code fails to establish an implication needed by the
certificate.  A one-character programming mistake is placed in this class if
the released computation actually relies on it. Within this category we further
distinguish \textbf{errors already present in the paper (Cp)} from \textbf{errors only appearing in the source code (Cs)}.
\end{description}
This convention separates the seriousness of the consequence from the likely
cause of the slip.  In particular, calling an item a certificate error does
not suggest that the intended theorem is false; it means that the certificate
released in v2 and retained in CMP needs a corrected argument or recomputation.

For clarity, the phrase \emph{v2 prints} refers to the PDF, \emph{the released
code} refers to the source archive just identified, and \emph{a corrected
calculation} describes what is mathematically required rather than what v2
already does.

\subsection{Check against the published CMP version}
\label{sec:cmp-check}

The paper subsequently appeared in \emph{Communications in Mathematical
Physics}, volume 406, article 261, and was published online on 3 October 2025
\cite{EP-CMP}.  We checked the complete 36-page version-of-record PDF whose
digest is recorded above, rather than relying on the publisher's HTML preview.
We compared every entry in Table~\ref{tab:classification} with the journal
text.  No audited printed item is corrected.  The identifiers (16), (27),
(30), (32), (33), and~\((\star)\) are retained; only their page locations
change.  The journal also continues to direct the reader to the companion
files at the arXiv submission and provides no replacement code archive.
Since CMP is the refereed version, this is also the observable outcome of the
refereeing and publication process: none of the audited items was corrected.
The referee reports are not public, so this statement concerns the published
outcome and does not speculate about what any individual referee may have
noticed.
Neither the publisher page nor Crossref's relation metadata linked a
correction or erratum as of 13 August 2026.\footnote{The relation metadata can
be queried at
\url{https://api.crossref.org/works/10.1007/s00220-025-05367-6}.}

For the main-body items, the complete PDF gives the following direct
cross-check:
\begin{itemize}
\item Lemma~2.2 still starts the displayed truncations at \(j=1\) despite its
  nonzero zero mode (CMP p.~12), while the later working truncation still
  starts at \(j=0\) (CMP p.~26).  Lemma~2.4 still states the mode-25 residual
  claim (CMP p.~13).
\item Equation~(27), the overlapping six-region decomposition~\((\star)\),
  and the missing rings in the modified-weight series remain on CMP
  pp.~20--21.  Equation~(30) retains both disputed signs on CMP p.~21.
\item The local reversed inequality in the proof of Lemma~5.1 remains on CMP
  p.~22.  The transformed-profile and beta/eta formula slips remain in
  Lemmas~5.4--5.8 and their proofs on CMP pp.~23--27.
\end{itemize}
The Appendices and companion-code interface give the same conclusion:
\begin{itemize}
\item CMP Appendix A.3.2 (p.~29) retains the slot convention
  \(\code{cring(j)}=\widehat{\mathring c}_{j-1}\) and still says that its
  Section B3 computes \(P_{N_4},Q_{N_4}\), while referring to the same
  companion code.  Thus publication does not repair the mode-25 selection or
  the coefficient-sign loop.
\item CMP Appendix B.1 (pp.~31--32) retains both the \(M=N=5000\) tail claim
  and the line \(110+3=113\).  Appendix B.2 (p.~33) retains the extra
  \(2/\pi\), the factor
  \(c_N^{(j-1)}\), and the domain \(j\ge0\); hence the zeroth-moment problem is
  unchanged.
\item CMP Appendix A.2.3 (p.~28) still describes equation~(30) using
  \(-45.6\) as a lower bound for the Laplace datum.  Appendices A.3.6--A.3.7
  (p.~30) still delegate the directed Picard iterates and derivative stepping
  to the same \path{shorttime_sym.m}.  The code-only endpoint, coefficient-loop,
  Picard-order, omitted-block, point-value, and discounted-tail findings are
  therefore unchanged in status.
\item Print-only items also persist: Appendix A.3.7 still names \(P_1,P_2\),
  Appendix A.3.8 (p.~31) still prints an eta integrand without \(\gamma^2\),
  and Lemma~D.1 (p.~35) still uses the wrong dummy index in its proof.
\end{itemize}
Although journal production changes the pagination and adds publication front and back
matter, it does not change any mathematical item audited here.  Detailed
locations below remain keyed to v2 for source-code reproducibility; the CMP
locations in this subsection record the independent version-of-record check.

\section{Minimal setup}
\label{sec:setup}

This section isolates only the definitions needed to understand the audit.
The full analytic reduction belongs to \cite{EP-main,EP-symbolic}.  To make
the provenance of the finite formulas explicit, each group below identifies
whether it is copied from the paper, algebraically rewritten from it, or
introduced as local audit notation.  Unless ``CMP'' is written, the equation
and page references in this section are to v2.  Numbers attached to displays
in this note are local; an original equation number is always identified
explicitly as belonging to v2 or CMP.

\subsection{Modified dual weights and the terminal forcing}

Set
\[
 x=x(\gamma):=\frac1{1+\gamma^2}.
\]
For a scalar angular function, use the normalized average
\[
 \langle h\rangle:=\frac2\pi\int_0^\infty
 \frac{h(\gamma)}{1+\gamma^2}\dd\gamma.
\]
Here \(x\) merely abbreviates the powers written explicitly in v2
Lemma~2.2 (p.~11; CMP p.~12).  The average is the
\(\gamma=\tan\theta\) form of the paper's normalized angular average
(v2 pp.~4, 7; CMP pp.~4, 7) \cite{EP-symbolic,EP-CMP}.

For bookkeeping, encode the time-dependent coefficient data in the formal
generating series below.  This is a compact synthesis of the modified-weight
definition in v2 equations~(13)--(15) (pp.~10--11), the coefficient
construction in Lemma~2.2 (p.~11), and the working truncations in
Section~5.3 (p.~23); CMP retains these ingredients on pp.~11--12 and~26.
The v2 paper does not display these infinite generating series, and no
convergence assertion for them is used here \cite{EP-symbolic,EP-CMP}:
\begin{equation}
 \mathcal W_1(t,x):=\mathring c_0(t)+\sum_{j\ge1}\mathring c_j(t)x^j,
 \qquad
 \mathcal W_2(t,x):=\sum_{j\ge1}\mathring d_j(t)x^j.
 \label{eq:weight-series}
\end{equation}
Inverse Laplace transformation of the displayed recurrence in v2 Lemma~2.2
gives a triangular ODE recurrence for the positive-index coefficients.  The
only part needed below is that the coefficient of the first omitted power
after truncation through mode \(M\) is, verbatim from v2 Lemma~2.2
(p.~11; \path{invertibility.tex}, lines 445--449; CMP p.~12)
\cite{EP-symbolic,EP-CMP},
\begin{equation}
 P_M=8M\mathring c_M+10\mathring d_M,
 \qquad
 Q_M=8M\mathring d_M.
 \label{eq:terminal-forcing}
\end{equation}
Thus the correctly indexed finite sums are, in notation introduced here,
\begin{equation}
 C_M=\sum_{j=0}^M\mathring c_jx^j,\qquad
 D_M=\sum_{j=0}^M\mathring d_jx^j,
 \label{eq:weight-truncation}
\end{equation}
where \(\mathring d_0=0\).  The inclusion of \(j=0\) is not cosmetic: the
modified first weight has the nonzero zero mode computed in
\eqref{eq:zero-mode}.  This lower
limit is the one used by the paper's later working truncations
(v2 p.~23; \path{invertibility.tex}, lines 1053--1060; CMP p.~26), rather
than the inconsistent lower limit in the statement of Lemma~2.2
\cite{EP-symbolic,EP-CMP}.

The released \path{shorttime_sym.m}, lines 18--37, first generates modes
\(1,\ldots,M\) as arrays \code{c(1:M)}, \code{d(1:M)}, and then prepends the
zero mode.  Consequently
the packed arrays obey
\begin{equation}
 \code{cring(r)}=\widehat{\mathring c}_{r-1},\qquad
 \code{dring(r)}=\widehat{\mathring d}_{r-1}.
 \label{eq:slot-convention}
\end{equation}
This one-based convention is stated in v2 Appendix~A.3.2 (p.~26;
\path{invertibility.tex}, lines 1185--1192) and retained in CMP
Appendix~A.3.2 (p.~29) \cite{EP-symbolic,EP-CMP}.

\subsection{Volterra iteration and directed envelopes}

With \(f=\Upsilon\) and \(K=K_2\), the short-time equation below is the
algebraically rearranged form of equation~(18) in v2 Lemma~2.5 (p.~13),
repeated as equation~(31) on
p.~20; the corresponding CMP locations are pp.~14 and~22
\cite{EP-symbolic,EP-CMP}.  The convolution symbol is local shorthand:
\begin{equation}
 f=g-K\conv f,\qquad
 (h\conv k)(t):=\int_0^t h(t-s)k(s)\dd s.
 \label{eq:volterra}
\end{equation}
Its Picard iterates are those of v2 Lemma~5.2 and Remark~5.3 (p.~19;
CMP p.~22) \cite{EP-symbolic,EP-CMP}.  We rename the paper's \(P_m\) as
\(\Pi_m\) to avoid collision
with the terminal forcing \(P_M\) in \eqref{eq:terminal-forcing}:
\begin{equation}
 \Pi_0=g,\qquad \Pi_{m+1}=g-K\conv\Pi_m.
 \label{eq:picard}
\end{equation}
The next display is audit notation packaging the kernel and source bounds
described in v2 Appendices~A.3.3 and A.3.5 (p.~26; CMP pp.~29--30)
\cite{EP-symbolic,EP-CMP}; in particular, \(E\) is introduced here.  Suppose
the available directed bounds are
\begin{equation}
 0\le K_\ell\le K\le K_u,\qquad
 g_\ell\le g\le g_u,\qquad E:=K_u-K_\ell\ge0.
 \label{eq:envelopes}
\end{equation}
Because convolution by a nonnegative kernel preserves order, choosing the
correct endpoint in a product depends on the sign of the function being
convolved.  This elementary point is the source of the Picard issue in
Section~\ref{sec:picard-error}.

\subsection{Profile coefficients and weighted moments}

Put \(\beta:=\gamma^2/(1+\gamma^2)=1-x\); this is local shorthand for the
factor written explicitly in the paper, not the radial variable denoted there
by \(z\).  In the paper's 13-rescaled convention, let
\(\bar\Gamma_i:=\Gamma_i^*-\Gamma_i^*(0)\).  V2 Lemma~3.2 (p.~14) gives
the profile series, initially from \(k=0\), and proves \(a_0=b_0=0\);
CMP reproduces it as Lemma~3.2 on p.~15 \cite{EP-symbolic,EP-CMP}.  Hence
\begin{equation}
 \bar\Gamma_1=\sum_{k\ge1}a_k\beta^k,\qquad
 \bar\Gamma_2=\sum_{k\ge1}b_k\beta^k.
 \label{eq:profile-series}
\end{equation}
The rational recurrence is v2 equation~(24) (p.~15;
\path{invertibility.tex}, lines 611--638), retained as CMP equation~(24)
on p.~16 \cite{EP-symbolic,EP-CMP}:
\begin{equation}
 \binom{a_k}{b_k}=W_k\binom{a_{k-1}}{b_{k-1}}\quad(k\ge2),\qquad
 W_k=
 \begin{pmatrix}
 \dfrac{32k^2+4k-101}{2(16k^2+30k-19)}&
 \dfrac{26(k-1)}{16k^2+30k-19}\\[5pt]
 -\dfrac{35}{16k^2+30k-19}&
 \dfrac{16k^2-4k-12}{16k^2+30k-19}
 \end{pmatrix}.
 \label{eq:profile-recurrence}
\end{equation}
Remark~3.1 of v2 gives \(\mathsf A=-351/19\), while the proof of
Lemma~3.2 gives \(b_1=(35/27)\mathsf A\) and
\(a_1=(13/14)b_1\) (p.~14; CMP pp.~15--16).  Simplifying those printed
identities yields the normalized seed \cite{EP-symbolic,EP-CMP}:
\begin{equation}
 a_1=-\frac{845}{38},\qquad b_1=-\frac{455}{19}.
 \label{eq:profile-seed}
\end{equation}
The moment integral occurs in the proof of v2 Lemma~4.4 (p.~17) and again
in Appendix~B.2 (p.~29); CMP retains it on pp.~19 and~32--33.  The
factorial expression below is the exact half-integer-gamma simplification of
the paper's displayed beta-integral value \cite{EP-symbolic,EP-CMP}:
\begin{equation}
 c_k^{(j)}=\frac2\pi\int_0^\infty
 \left(\frac{\gamma^2}{1+\gamma^2}\right)^k
 \frac1{(1+\gamma^2)^j}\frac{\dd\gamma}{\gamma^2}
 =\frac{(2k-2)!(2j)!}
 {4^{k+j-1}(k-1)!j!(k+j-1)!}
 \qquad(k\ge1,\ j\ge0),
 \label{eq:moment-coefficient}
\end{equation}
Substitution of the absolutely convergent profile series into the integral
definitions of Appendix~B.2 gives
\begin{equation}
 I_j=\sum_{k\ge1}a_kc_k^{(j)},\qquad
 J_j=\sum_{k\ge1}b_kc_k^{(j)}.
 \label{eq:profile-moments}
\end{equation}
These definitions already include the normalization \(2/\pi\).  Thus
\eqref{eq:profile-moments} is an exact series rewrite of the paper's
integrals, not a separately printed definition (v2 Appendix~B.2, p.~29;
CMP pp.~32--33) \cite{EP-symbolic,EP-CMP}.

The coefficient checks below also use the two constants
\[
 \mathsf A:=-\frac{351}{19},\qquad c_*:=\frac{237}{46},
\]
The v2 paper calls the second constant \(c\).  The value \(\mathsf A\) is
printed in Remark~3.1 (p.~14; CMP p.~15), and \(c=237/46\) is used in the modified
decomposition on v2 p.~10 and Section~4.1 p.~16 (CMP pp.~10, 17)
\cite{EP-symbolic,EP-CMP}.
The transformed and modified profile coefficients are
\begin{align}
 \mathrm{af}_k&=-\frac12a_k+\frac{13}{2}b_k,\nonumber\\
 \mathrm{bf}_1&=5a_1+5\mathsf A-\frac12b_1,&
 \mathrm{bf}_k&=5(a_k-a_{k-1})-\frac12b_k\quad(k\ge2),\nonumber\\
 \mathrm{afm}_k&=-\left(c_*+\frac12\right)a_k+\frac{13}{2}b_k,\nonumber\\
 \mathrm{bfm}_1&=5a_1+5\mathsf A-\left(c_*+\frac12\right)b_1,&
 \mathrm{bfm}_k&=5(a_k-a_{k-1})-\left(c_*+\frac12\right)b_k
 \quad(k\ge2).
 \label{eq:transformed-coefficients}
\end{align}
These are the quantities denoted \code{af}, \code{bf}, \code{afm}, and
\code{bfm} in the released scripts.  V2 equation~(32) names the unmodified
\(\mathrm{af},\mathrm{bf}\) series (p.~22; CMP p.~25).  Their formulas follow
by inserting \eqref{eq:profile-series} into the initial datum \(F^*\) following
v2 equations~(5)--(6) (p.~6; \path{invertibility.tex}, lines 252--261;
CMP p.~7) and using the profile equations~(20)--(21) (v2 p.~14; CMP p.~15).
The proof
of Lemma~4.4 prints the modified \(\mathrm{afm},\mathrm{bfm}\) formulas
(v2 pp.~17--18; \path{invertibility.tex}, lines 782--793; CMP pp.~19--20)
\cite{EP-symbolic,EP-CMP}.

\subsection{The datum at \texorpdfstring{\(\xi=-1\)}{xi=-1} and the filtered average}

For one counterexample it is convenient to denote the unmodified positive
weight modes at \(\xi=-1\) by \(p_j,q_j\).  This notation is introduced
here: the display is the scalar \(\xi=-1\) specialization
\(p_j=c_j(-1)\), \(q_j=d_j(-1)\) of v2 Lemma~2.2
(pp.~11--12; \path{invertibility.tex}, lines 412--435 and 453--472),
retained in CMP Lemma~2.2 on pp.~11--12 \cite{EP-symbolic,EP-CMP}:
\begin{align}
 p_0&=\frac19,&q_0&=\frac{11}{54},\nonumber\\
 p_1&=\frac{10q_0}{17},&q_1&=\frac{13p_1}{20},\nonumber\\
 p_j&=\frac{8(j-1)p_{j-1}+10q_{j-1}}{8j+9},&
 q_j&=\frac{8(j-1)q_{j-1}+13p_j}{8j+12}\quad(j\ge2).
 \label{eq:minus-one-recurrence}
\end{align}
All entries in this recurrence are positive.

At large time, v2 reconstructs a filtered angular average.  It writes
\(\Theta^{\rm mod}:=\Theta-c\Gamma^*\), with \(c=237/46\), on p.~10
(\path{invertibility.tex}, lines 361--365), and normalizes the 13-rescaled
profile by \(\langle\Gamma_1^*\rangle=26\) on p.~6, footnote~10; CMP
retains these formulas on pp.~10 and~7.  Writing \(c_*=c\), the first
component therefore becomes \cite{EP-symbolic,EP-CMP}
\[
 \Theta_1=c_*\Gamma_1^*+\Theta_1^{\rm mod}.
\]
For brevity, introduce the local shorthand
\[
 T(t):=\langle\Theta_1^{\rm mod}(t)\rangle.
\]
Its Laplace value is the quantity written out in the v2 proof overview
(p.~7) and estimated in equation~(29)/Lemma~4.5 (pp.~18--19), retained on
CMP pp.~8 and~21 \cite{EP-symbolic,EP-CMP}.  With the Laplace convention
used after v2 equation~(15) (p.~11; CMP p.~11),
\[
 \widehat T(-1)=\int_0^\infty e^sT(s)\dd s.
\]
The finite-time filter is the 13-rescaled form of the footnote to v2
equation~(1) (p.~6; \path{invertibility.tex}, lines 244--248), after
inserting the decomposition above.  The same rescaled convention is used in
the overview (p.~7) and in equation~(30) (p.~19); CMP retains the
corresponding displays on pp.~6, 8, and~21 \cite{EP-symbolic,EP-CMP}.  Thus
the following is an exact algebraic rewrite of the defining filter, not a
quotation of equation~(30):
\begin{equation}
 \Upsilon(t)=26c_*e^{-t}+T(t)-e^{-t}\int_0^te^sT(s)\dd s.
 \label{eq:filter-start}
\end{equation}
Here \(\Upsilon\) denotes the later, \(13\)-rescaled convention adopted in v2
after its initial definition.  Multiplication by this positive constant does
not affect any sign conclusion.  Splitting the finite integral at infinity
gives the sign-correct terminal form in \eqref{eq:correct-duhamel}; v2/CMP
equation~(30) instead prints the two reversed signs audited there
\cite{EP-symbolic,EP-CMP}.

\section{Printed typos}
\label{sec:typos}

The items in this section are local misprints under the classification in
Section~\ref{sec:scope}.  Some printed statements are literally false, but
the surrounding argument or code already uses the evident intended version.

\subsection{Lower limit in the finite weight}
\label{sec:lower-limit}

Lemma~2.2 as printed in v2 (PDF p.~11; \path{invertibility.tex}, lines
436--449) defines its finite first-weight sum from \(j=1\).  Yet the displayed
Laplace coefficients in the same lemma give
\[
 c_0(\xi)=\frac1{\xi+10},\qquad
 d_0(\xi)=\frac{\xi+23}{(\xi+13)(\xi+10)},
\]
and therefore
\begin{equation}
 \widehat{\mathring c}_0=\frac{c_0}{d_0}-1=-\frac{10}{\xi+23},\qquad
 \mathring c_0(t)=-10e^{-23t}\ne0.
 \label{eq:zero-mode}
\end{equation}
If that mode is omitted, the purported tail has initial value \(-10\), not
the zero initial datum used later.

This is best classified as a statement typo rather than a separate
certificate error.  Equation~(16) in the proof of Lemma~2.2
(\path{invertibility.tex}, lines 455--456) is not itself the definition of
the modified truncation: it is the underlying unmodified weight expansion,
and that expansion begins at \(j=0\).  More decisively, the later working
modified-weight truncation on v2 p.~23 (source line 1059) \emph{currently}
begins at \(j=0\).  Neither of those two formulas needs this correction; they
identify \(j=1\) in the statement of Lemma~2.2 as the inconsistent lower
limit.  The corrected statement is \eqref{eq:weight-truncation}.

\subsection{Coefficient names in the transformed-profile tail}
\label{sec:coef-name}

Equation~(32) on v2 p.~22 expands two transformed quantities \(H_1,H_2\)
using coefficients denoted \(\mathrm{af}_k,\mathrm{bf}_k\).  Lemma~5.8
immediately afterward defines its tail size using the original profile
coefficients \(a_N,b_N\).  As printed, the bound is false: at \(N=4\), direct
use of the displayed recurrence gives
\[
 (a_4^2+b_4^2)^{1/2}<\frac{31}{10},\qquad
 \abs{\mathrm{af}_5}>\frac{24}{5},\qquad
 \abs{\mathrm{bf}_5}>5.
\]
Thus the first omitted transformed coefficient already exceeds the printed
constant.  The released code, however, uses
\code{abs(af(h))} and \code{abs(bf(h))} componentwise in
\path{shorttime_sym.m}, lines 105--109.  We therefore classify the names
\(a_N,b_N\) in the lemma statement as a printed typo.  This classification
does not assert that the separate componentwise tail estimate is proved; it
only says that this particular mismatch is not what the code implements.

The same distinction resolves two nearby slips on v2 p.~23.  Source lines
1061--1066 assert \(\mathrm{bf}_j\le0\) for \(j=0,\ldots,25\), although mode
zero is not defined and \eqref{eq:profile-recurrence}--
\eqref{eq:transformed-coefficients} give
\[
 \mathrm{bf}_2=\frac{1222}{19}>0.
\]
The released implementation does not use the false uniform sign claim:
\path{shorttime_sym.m}, lines 98--104, branches separately on the sign of each
\code{bf(h)}.  We therefore classify this as a printed derivation error, not
an independent implementation error.  Also, source lines 1053--1056 write
\(\ell_N\) after choosing the two cutoffs \(N_3,N_4\); the transformed-profile
tail cutoff there is \(N_3\), so the symbol should be \(\ell_{N_3}\).

\subsection{Normalization and endpoint slips}
\label{normalization-error}

The unnumbered moment-tail display before Lemma~B.2 on v2 p.~29 places an
extra \(2/\pi\) in front of \(I_j^{>N},J_j^{>N}\), although the definitions
\eqref{eq:moment-coefficient}--\eqref{eq:profile-moments} already contain
that normalization.  The released code adds its tolerance directly to
\(I_j,J_j\), so the extra factor is a printed normalization typo.  A separate,
proof-relevant failure of that tolerance is treated in
Section~\ref{sec:moment-error}.

The same Appendix gives the recurrence for \(c_k^{(j)}\) on source lines
1349--1351 with the range \(j>0,\,k\ge0\).  But \(c_0^{(j)}\) lies outside
\eqref{eq:moment-coefficient} (and its defining integral diverges at the
origin); the recurrence range is \(k\ge1\).  The released arrays start at
\(k=1\), so this is another print-only index typo.

The rectangular decomposition \((\star)\) on v2 p.~18 lets its finite block
(1) reach \(j=35\) while blocks (2) and (4) also begin at \(j=35\), so the
printed regions overlap.  At least one endpoint must change.  The code range
\code{1:35} represents \(j=0,\ldots,34\), suggesting one possible convention;
a complete disjoint partition must also account for \(k>35,j=35\).  This
endpoint inconsistency is a printed typo.  The omission of whole finite blocks
from the code is a separate certificate error in
Section~\ref{sec:rectangles}.

The same page has a second notation-only mismatch
(\path{invertibility.tex}, lines 795--799).  The left side pairs \(H_i\) with
the modified weights \(\widehat{\mathring w}_i\), while the displayed double
series uses the unmodified symbols \(c_j,d_j\) instead of
\(\mathring c_j,\mathring d_j\).  This is literally wrong, especially at the
zero mode, but \path{profile.m}, lines 147--148 and 183--186, uses
\code{cringp}, \code{dringp}.  We therefore classify the missing rings as a
printed notation typo.

\subsection{Beta-tail notation and derivation typos}
\label{sec:beta-tail}

The differential inequality on v2 p.~21 (source line 929) drops the exponent
\(M+1\) from its factor
\((1+e^{-8t}\gamma_0^2)^{-(M+1)}\).  The tail forcing in Lemma~2.2, the
integrated formula at source line 933, and \path{shorttime_sym.m} all retain
that exponent.  This is a printed exponent typo.

In the proof of the beta-tail bound on v2 p.~22
(\path{invertibility.tex}, line 1001), the first integral after splitting at
\(\gamma=1\) prints \((1+e^{8r})^{-M}\), omitting the factor
\(\gamma^2\) from that denominator; it also omits the terminal \(d\gamma\)
from that first integral.  The next line restores
\((1+e^{8r}\gamma^2)^{-M}\), but source line 1004 then displays the weaker and
incompatible second term \(e^{-4r}/(2M-1)\).  The stated bound and
\path{shorttime_sym.m}, lines 305 and 308, instead use
\(e^{-8Mr}/(2M-1)\).  That stated estimate is obtained directly, for
\(\gamma\ge1\), from
\[
 \frac{1}{(1+e^{8r}\gamma^2)^M(1+\gamma^2)}
 \le e^{-8Mr}\gamma^{-2M}.
\]
Integration over \([1,\infty)\) gives
\(e^{-8Mr}/(2M-1)\).  Thus the missing \(\gamma^2\) at line 1001 and the
\(e^{-4r}\) term at line 1004 are defects in the printed derivation, while the
statement and code use an independently valid bound.  We therefore classify
this as a derivation typo rather than a certificate error.

Appendix~A.3.8 on v2 p.~27 repeats the same missing factor in its prose
description of \code{eta}: source lines 1243--1246 print the denominator
\((1+e^{8t})^M\), with no \(\gamma^2\).  The actual definition on v2 p.~21,
the statement of the beta-tail lemma, and \path{shorttime_sym.m}, lines
295--310, all concern \((1+e^{8t}\gamma^2)^M\).  We classify the Appendix
description as the same printed notation typo.

Two nearby index statements are also print-only.  First, Lemma~5.6 on v2
p.~22 (source lines 970--983) permits \(N=0\).  But the definition
\eqref{eq:moment-coefficient}'s analogue for \(\eta(r,M,N)\) has integrand
asymptotic to \(\gamma^{2N-2}\) at the origin, so \(\eta(r,M,0)\) diverges;
the displayed \(c_0\) is not defined either.  The range must be \(N\ge1\).
The released calls use \(n=1\) or \(n=h\in\{1,2,3,4\}\), so no computed
enclosure relies on \(N=0\).

Second, Appendix~A.3.4 on v2 p.~26 (source lines 1211--1216) swaps the roles
of the arguments of \code{alphabeta}.  The function definition at
\path{shorttime_sym.m}, lines 316--326, calls \code{eta(m+1,n,C)}; the calls
at lines 65 and 90 use \code{m=N4} for the weight cutoff and \(n=1\) or
\(n=h\) for the power of \(\gamma^2/(1+\gamma^2)\).  Thus equation~(33)
should display the power \(n\), while its footnote should say \(m=N_4=25\).
The code uses those intended roles, so this is an Appendix notation typo, not
an additional implementation error.

Finally, v2 p.~23 (source line 1080) writes
\(\eta_u(8(t-w),N_4+1,k)\).  Since the definition of \(\eta(r,M,N)\) already
contains \(e^{8r}\), while the denominator immediately above is
\(1+e^{8(t-w)}\gamma^2\), the first argument there must be \(t-w\), not
\(8(t-w)\).  The substitution in \path{shorttime_sym.m}, lines 318--320, is
indeed \code{tt-ss}.  This is another printed argument typo not executed by
the code.

\subsection{Further local prose and notation slips}
\label{sec:local-prose}

In the proof of Lemma~5.1 on v2 p.~20, the displayed relation
\[
 f=g-K_2\conv f\ge g
\]
must read \(f\le g=P_0\) by \eqref{eq:volterra} and nonnegativity, as the
surrounding alternating inequalities already
require.  In Appendix~D.1 on v2 p.~31 (source line 1432), the summation dummy
variable should be \(j\):
\[
 \sum_{j=M_2}^{k-1}\log(1-\alpha/j)
 \le-\alpha\sum_{j=M_2}^{k-1}\frac1j.
\]
Finally, Appendix~A.3.7 on v2 p.~27 (source line 1241) says that the polynomialized
iterates are \(P_1,P_2\); \path{shorttime_sym.m}, lines 245--250, constructs
\(P_1,P_3\), consistently with the preceding paragraph.  These three slips
are typographical and are not independent numerical tests.

There are also three harmless index/norm slips.  In Lemma~2.2 on v2 p.~11
(source lines 436--440), the family \((\mathring c_j,\mathring d_j)\) is first
subscripted by \(k\), and the following condition says \(k\ge1\); both indices
are \(j\).  The phrase ``whenever \(M\in\mathbb N,\,N\ge2\)'' introduces an
otherwise unused \(N\); the intended condition is on \(M\).  Finally, the
display on v2 p.~28 (source line 1265) omits the subscript \(\infty\) from its
first remainder norm.  The surrounding formulas fix all three readings, so
they are print-only notation typos.

\section{Short-time coefficient and Picard certificate}
\label{sec:short-errors}

\subsection{The upper endpoint is a hybrid mode}
\label{sec:endpoint-shift}

\begin{proposition}[Mode-25 endpoint shift]\label{prop:endpoint-shift}
The coefficient recurrence correctly generates modes \(1,\ldots,25\).
The subsequent array selection uses mode-24 values with the mode-25
multiplier, and the finite sums omit mode 25.
\end{proposition}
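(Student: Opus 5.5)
The proof is a source-level trace through \path{shorttime_sym.m}, checked against the slot convention \eqref{eq:slot-convention} and the terminal forcing \eqref{eq:terminal-forcing}. It has three steps, one for each clause of the statement.

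\textbf{Step 1: the recurrence generates modes \(1,\ldots,25\) correctly.} I will read lines 18--37. The loop fills \code{c(1:M)} and \code{d(1:M)} with \(M=N_4=25\), following the triangular recurrence of v2 Lemma~2.2. I will confirm that the loop bound really is \(M\), not \(M-1\). As an exact check, I will compare \(\widehat{\mathring c}_j,\widehat{\mathring d}_j\) for a few \(j\), including \(j=25\), against an independent evaluation of the printed Laplace recurrence. After the zero mode is prepended, \eqref{eq:slot-convention} puts mode \(25\) in slot \(26\), so the packed arrays have length \(26\).

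\textbf{Step 2: the terminal selection is a hybrid.} I will locate the lines that form \(P_{N_4},Q_{N_4}\), which Appendix~A.3.2 says Section~B3 computes. I expect the code to index \code{cring(N4)} and \code{dring(N4)} while multiplying by the explicit factor \(8\cdot N4\). By \eqref{eq:slot-convention}, \code{cring(25)} is \(\widehat{\mathring c}_{24}\). The code therefore computes
\[
8\cdot25\,\mathring c_{24}+10\,\mathring d_{24},\qquad 8\cdot 25\,\mathring d_{24},
\]
which equals neither \(P_{24}\) nor \(P_{25}\). To show this is not a harmless relabelling, I will exhibit one exact instance where the two differ. The natural choice is a fixed \(\xi\), for example the positive \(\xi=-1\) data of \eqref{eq:minus-one-recurrence}, adapted to the modified coefficients. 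There I will evaluate both the hybrid and the correct \(P_{25}\) as rationals and show they are unequal.

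\textbf{Step 3: the finite sums omit mode 25.} I will inspect the loops or array slices that assemble \(C_{N_4},D_{N_4}\) from \eqref{eq:weight-truncation}. I expect ranges \code{1:N4}, which by \eqref{eq:slot-convention} cover modes \(0,\ldots,24\) only. The truncated weight is therefore \(C_{24}\), and the terminal forcing is consistent with neither cutoff.

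\textbf{Main obstacle.} The hardest step is Step 2. I must pin down exactly which slot each executed line reads, because helper functions may shift indices or pass \code{N4+1}. I would trace every use of \code{cring} and \code{dring} with exact line numbers. I would also confirm the diagnosis empirically: re-running the code with slot \(26\) selected should change the computed forcing.
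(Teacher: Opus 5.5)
Your proposal is correct and follows the paper's proof, a source-level trace with three findings: the loop fills modes \(1,\ldots,25\); after the zero mode is prepended, \code{cring(N4)} and \code{dring(N4)} are the mode-24 slots, paired with the multiplier \(8\cdot25=200\), so the executed pair is neither \(P_{24}\) nor \(P_{25}\); and ranges ending at \code{N4} cover only modes \(0,\ldots,24\). The paper needs no numerical \(\xi=-1\) comparison or re-run because the mismatch is structural, and it additionally notes that \code{alphabeta} calls \code{eta(m+1,...)} with exponent \(26\) (matching a truncation through mode 25) and that \path{PQ_sym.m} repeats the same terminal selection.
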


\begin{proof}
Lines 23--30 of \path{shorttime_sym.m} correctly generate
\code{c(k)}, \code{d(k)} for \(1\le k\le25\).  Lines 33--37 then prepend the
zero mode, so \eqref{eq:slot-convention} holds.  In particular, slot 25 is
mode 24 and slot 26 is mode 25.

Nevertheless, with \code{N4=25}, lines 57--58 execute
\begin{lstlisting}
P = 8*N4*ilaplace(cring(N4)) + 10*ilaplace(dring(N4));
Q = 8*N4*ilaplace(dring(N4));
\end{lstlisting}
and therefore form
\begin{equation}
 P_{\rm code}=200\mathring c_{24}+10\mathring d_{24},\qquad
 Q_{\rm code}=200\mathring d_{24}.
 \label{eq:hybrid-terminal}
\end{equation}
By \eqref{eq:terminal-forcing}, the intended pair is
\begin{equation}
 P_{25}=200\mathring c_{25}+10\mathring d_{25},\qquad
 Q_{25}=200\mathring d_{25},
 \label{eq:true-terminal}
\end{equation}
stored in slot 26.  The executed pair is not \(P_{24},Q_{24}\) either,
because those would carry the multiplier \(8\cdot24=192\).

Lines 62--63 and 79--80 use ranges ending at \code{N4}.  Under
\eqref{eq:slot-convention}, every included entry is the correct coefficient
of its own mode, but the range is only \(0,\ldots,24\); mode 25 alone is
omitted.  Meanwhile \code{alphabeta} at line 318 uses
\code{eta(m+1,...)} and hence the exponent \(26\), which belongs to a
truncation through mode 25.  Lines 34--35 of \path{PQ_sym.m} repeat the same
off-by-one terminal selection.  The corrected endpoint and ranges should be
\code{N4+1} (or \code{M+1}).
\end{proof}

\begin{remark}[Scope of the error]
This is not a claim that every modal coefficient in the released array is
wrong.  The recurrence has correctly computed the modes; the endpoint
selection is wrong.  However, the hybrid \(P_{\rm code},Q_{\rm code}\) is then
used as the forcing in the tail construction.  Consequently the resulting
tail, kernel, source, and Picard enclosures must be recomputed.  Correcting
only a displayed endpoint after those downstream objects have been generated
would not repair the certificate.  In particular, the released calculation
does not establish Lemma~2.4 for the actual \(P_{25}-Q_{25}\).
\end{remark}

\subsection{The residual coefficient loop is not componentwise}
\label{sec:sign-loop}

After constructing a polynomial coefficient vector
\code{bigcoeffsimple}, \path{PQ_sym.m}, lines 52--65, loops over an index
\code{j}.  The intended test is a sign assertion for each coefficient.
The two predicates actually executed on lines 57 and 61 are
\begin{lstlisting}
bigcoeffsimple <= 0
bigcoeffsimple >= 0
\end{lstlisting}
with no \code{(j)}.  Thus the loop variable is absent from the condition, and
the loop does not test the claimed scalar statement at index \code{j}.
In fact, according to MATLAB documentation
(\url{https://www.mathworks.com/help/matlab/ref/if.html}),
the statements enclosed in an \code{if} block will execute if the condition contains
\textit{only} nonzero values. Thus a block of positive coefficients followed by
another block of negative coefficients will \textit{never} pass the if test, so
the flag indicating a potential wrong sign of the coefficients will never be raised.
Lines 73--77 nevertheless split the vector at the asserted single sign change to
form \code{pospoly} and \code{negpoly}, pretending that the sign check has passed.

The local code correction is to test \code{bigcoeffsimple(j)}.  We do not
need to rely on any particular MATLAB rule for interpreting a vector-valued
condition: the decisive fact is that the executed predicate is not the
componentwise predicate claimed in the comment.  This appears to be a
one-token oversight, but it is a certificate error because no independent
componentwise audit is supplied and the subsequent decomposition uses the
unverified sign pattern.  This observation does not assert that the intended
sign pattern is false, however.

\subsection{Sign-sensitive Picard multiplication}
\label{sec:picard-error}

\begin{lemma}[Safe second-iterate upper bound]\label{lem:safe-picard}
Assume \eqref{eq:envelopes}, let \(L_1\le\Pi_1\), and suppose
\(L_1\ge-\Lambda\).  Then
\begin{equation}
 \Pi_2\le
 g_u-K_\ell\conv L_1+\Lambda E\conv1.
 \label{eq:safe-picard}
\end{equation}
Here \(1\) denotes the constant-one function on \([0,\infty)\).
\end{lemma}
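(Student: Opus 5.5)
The plan is to start from the definition \eqref{eq:picard} of the second iterate, \(\Pi_2=g-K\conv\Pi_1\), and bound the two pieces separately. The source term is immediate, since \(g\le g_u\) by \eqref{eq:envelopes}. The remaining task is a lower bound for \(K\conv\Pi_1\), because it enters with a minus sign. Convolution against a nonnegative kernel is monotone in the convolved function. As \(K\ge0\) and \(\Pi_1\ge L_1\), this gives \(K\conv\Pi_1\ge K\conv L_1\).

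The next step replaces the unknown kernel \(K\) by the computable \(K_\ell\). This is the sign-sensitive point: \(L_1\) may be negative, so \(K\conv L_1\ge K_\ell\conv L_1\) is false in general. To handle this, I will shift \(L_1\) by \(\Lambda\) and write
\[
 K\conv L_1=K\conv(L_1+\Lambda)-\Lambda\,K\conv1 .
\]
The shifted function \(L_1+\Lambda\) is nonnegative by hypothesis, and \(K\ge K_\ell\ge0\). Monotonicity in the kernel therefore gives \(K\conv(L_1+\Lambda)\ge K_\ell\conv(L_1+\Lambda)\). For the constant term, \(K\le K_u\) gives \(-\Lambda K\conv1\ge-\Lambda K_u\conv1\); this requires \(\Lambda\ge0\), which is implicit because \(\Lambda\) appears as a lower-bound magnitude. (If needed, I will note that \(\Lambda\ge0\) can always be arranged by enlarging it.)

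Combining the two kernel estimates yields
\[
 K\conv L_1\ge K_\ell\conv L_1+\Lambda K_\ell\conv1-\Lambda K_u\conv1
 =K_\ell\conv L_1-\Lambda E\conv1 ,
\]
using \(E=K_u-K_\ell\). Substituting this and the bound on \(g\) into \(\Pi_2=g-K\conv\Pi_1\) gives exactly \eqref{eq:safe-picard}.

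The only point requiring care is that the shifted decomposition is what makes the sign-indefinite part of \(L_1\) pay only the envelope width \(E\). The naive endpoint choice is invalid for negative \(L_1\). The rest is linearity and pointwise monotonicity of \(\int_0^t h(t-s)k(s)\dd s\) in each nonnegative factor. I would also state that all functions involved are locally integrable so that the convolutions are defined on \([0,\infty)\).
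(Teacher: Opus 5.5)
Your proof is correct and is essentially the paper's argument. Both first use \(g\le g_u\) and monotonicity of convolution by \(K\ge0\) to reach \(\Pi_2\le g_u-K\conv L_1\), and then make the sign-indefinite part of \(L_1\) cost only the width \(E\); the paper splits the kernel as \(K=K_\ell+(K-K_\ell)\) and bounds \(-(K-K_\ell)\conv L_1\le\Lambda E\conv1\), whereas you shift the function to \(L_1+\Lambda\ge0\), which is the same estimate rearranged. Your explicit note that \(\Lambda\ge0\) is needed is apt, since the paper's last step relies on it tacitly as well.
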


\begin{proof}
By the recurrence \eqref{eq:picard}, and since \(K\ge0\) and
\(\Pi_1\ge L_1\),
\[
 \Pi_2=g-K\conv\Pi_1\le g_u-K\conv L_1.
\]
Writing \(K=K_\ell+(K-K_\ell)\) gives
\[
 -K\conv L_1
 =-K_\ell\conv L_1-(K-K_\ell)\conv L_1.
\]
The bounds \(L_1\ge-\Lambda\) and
\(0\le K-K_\ell\le E\) imply
\[
 -(K-K_\ell)\conv L_1\le\Lambda E\conv1,
\]
which proves \eqref{eq:safe-picard}.
\end{proof}

Lines 235--242 of \path{shorttime_sym.m} construct directed Picard iterates.
At the second upper step, line 241 uses only
\begin{equation}
 U_2^{\rm old}=g_u-K_\ell\conv L_1.
 \label{eq:old-picard}
\end{equation}
The width term \(\Lambda E * 1\) in \eqref{eq:safe-picard} is absent.  Formula
\eqref{eq:old-picard} is valid if \(L_1\ge0\), but the released script checks
the first lower iterate only through \(\log3\) (lines 256--274) while using
the third iterate through \(\log4\) (lines 276--285).  It provides no
\(L_1\ge0\) hypothesis on the additional interval.

This is an order error, not merely a missing safety margin.  If \(L_1<0\),
then multiplication by a larger kernel makes \(K\conv L_1\) more negative,
so \(-K\conv L_1\) is larger.  The lower kernel endpoint therefore gives a
lower, not an upper, contribution from that part.  The failure of the claimed
order implication already appears in the scalar model
\[
 K=1,\qquad K_\ell=0,\qquad L_1=-1,\qquad g_u=0.
\]
Then the true contribution \(g_u-KL_1\) equals \(1\), whereas
\eqref{eq:old-picard} gives \(0\).  Thus nonnegativity of \(L_1\), or an
explicit replacement for it, is essential.  The logically sufficient repair
is \eqref{eq:safe-picard}; it requires both a lower bound \(-\Lambda\) for
\(L_1\) and the kernel width \(E\).

\section{Profile bounds and moments}
\label{sec:profile-errors}

\subsection{The recorded \texorpdfstring{\(L^\infty\)}{L-infinity} test is five times too weak}
\label{sec:Loo-weak}

Appendix~B.1 of v2 (PDF pp.~27--28;
\path{invertibility.tex}, lines 1259--1266) derives
\begin{equation}
 \|E_N^i\|_\infty
 \le10M^{1.1}N^{-0.1}\sqrt{a_M^2+b_M^2}
 \label{eq:v2-linfty-bound}
\end{equation}
and then chooses \(M=N=5000\).  Substitution into
\eqref{eq:v2-linfty-bound} and squaring its nonnegative sides show that the
claimed 1/2 estimate would require
\begin{equation}
 100N^2(a_N^2+b_N^2)<\frac14.
 \label{eq:needed-linfty-test}
\end{equation}
Line 38 of \path{profile.m} instead forms
\[
 \rho_N:=20N^2(a_N^2+b_N^2),
\]
and line 39 tests only \(\rho_N<1/4\).  That condition is a factor of five
weaker than \eqref{eq:needed-linfty-test}.

This is an insufficient verification, not merely a presentational mismatch.
Exact application of \eqref{eq:profile-recurrence}--\eqref{eq:profile-seed}
to the released \(N=5000\) row gives the small outward bracket
\begin{equation}
 \frac3{20}<\rho_{5000}<\frac4{25}.
 \label{eq:rho-bracket}
\end{equation}
Thus the implemented test \(\rho_{5000}<1/4\) succeeds, while the square of
the bound printed in both v2 and CMP is \(5\rho_{5000}>3/4\), not \(<1/4\).
The calculation in the script therefore does not prove
\(\|E_N^i\|_\infty<1/2\) from \eqref{eq:v2-linfty-bound}.  We make no claim
here that a sharper, different profile argument cannot prove such a bound.

\subsection{The subsequent 113 budget does not follow}
\label{sec:budget-error}

There is an independent arithmetic error in the next \(L^\infty\) estimate
on v2 p.~28 (\path{invertibility.tex}, lines 1288--1291).  Even if one grants
the two asserted bounds
\(\|E_N^1\|_\infty,\|E_N^2\|_\infty\le1/2\), the displayed contribution to
the second transformed component is
\[
 5\|E_N^1\|_\infty+
 \left(\frac{237}{46}+\frac12\right)\|E_N^2\|_\infty
 \le\frac52+\frac{65}{23}
 =\frac{245}{46}>5.
\]
V2 replaces this contribution by \(3\) in the calculation
\(110+3=113\).  Since \(113\) is then used as an input to the comparison
budget, and is hardcoded in \path{longtime_sym.m}, lines 23 and 25, this is a
certificate error rather than a harmless arithmetic typo in an unused
display.

\subsection{The zeroth moment has no valid tail allowance}
\label{sec:moment-error}

For \(j\ge1\), positivity and the beta integral give the useful identity
\[
 \sum_{k>N}c_k^{(j)}=c_{N+1}^{(j-1)}.
\]
At \(j=0\), the analogous positive sum diverges; in particular,
\(c_N^{(-1)}\) is not defined by \eqref{eq:moment-coefficient}.  Nevertheless,
the unnumbered estimate immediately before Lemma~B.2 on v2 p.~29 claims a
bound using \(c_N^{(j-1)}\) while explicitly including \(j=0\).  After the
normalization typo identified in Section~\ref{sec:typos} is removed, the
printed formula still supplies no zeroth-row estimate.

The released code uses a different quantity.  After resetting \(N=500\) in
\path{profile.m}, lines 77--79, lines 96--109 test
\begin{equation}
 (a_N^2+b_N^2)c_N^{(0)}<(2\cdot10^{-4})^2
 \label{eq:endpoint-proxy}
\end{equation}
and then add \(2\cdot10^{-4}\) as an error allowance to every row.
\path{build_shorttime_data.m}, lines 55--67, repeats the same step.
Expression \eqref{eq:endpoint-proxy} is an endpoint proxy, not either tail
sum in \eqref{eq:profile-moments}; neither v2 nor CMP gives an implication from
the former to the latter.

\begin{proposition}[Counterexample with \(j=0\)]\label{prop:moment-counterexample}
For the rational profile generated by
\eqref{eq:profile-recurrence}--\eqref{eq:profile-seed},
\[
 I_0^{>500}:=\sum_{k>500}a_kc_k^{(0)}>2\cdot10^{-4}.
\]
Thus the uniform tolerance inserted by the released code is false for the
zeroth moment.
\end{proposition}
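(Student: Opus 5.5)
The plan is to prove the inequality by exact rational arithmetic. The first step is to generate \(a_k,b_k\) from the seed \eqref{eq:profile-seed} through the recurrence \eqref{eq:profile-recurrence}, for \(k\le K\) with some explicit \(K\) (a few thousand, say \(K=5000\), to match the rows already produced in Section~\ref{sec:Loo-weak}). In parallel we generate
\[
 c_k^{(0)}=\frac{(2k-2)!}{4^{k-1}((k-1)!)^2}
\]
from \eqref{eq:moment-coefficient} through the exact ratio \(c_{k+1}^{(0)}/c_k^{(0)}=(2k-1)/(2k)\). With these we evaluate the finite partial sum \(S_K:=\sum_{k=501}^{K}a_kc_k^{(0)}\) exactly, or at least bracket it by outward rational rounding, and check that \(S_K>2\cdot10^{-4}\).

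The remaining task is to show that the omitted tail \(\sum_{k>K}a_kc_k^{(0)}\) cannot pull the total below the threshold. The cleanest route is to show that it is nonnegative, i.e.\ that \(a_k>0\) for all \(k>K\), by an invariant-cone argument. The motivation comes from expanding \(W_k\) in \(1/k\):
\[
 W_k=I+\frac1k\begin{pmatrix}-7/4&13/8\\0&-17/8\end{pmatrix}+O(k^{-2}),
\]
with the lower-left entry \(-35/(16k^2+30k-19)\) of order \(k^{-2}\). Heuristically this gives \(a_k\sim Ck^{-7/4}\), so each summand behaves like \(Ck^{-9/4}\). The same expansion predicts the sign of \(a_k\), and the computed rows will confirm it; the proposition presupposes that \(a_k\) is eventually positive.

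For the cone itself we propose \(\mathcal C_\theta=\{(a,b):a>0,\ b\ge-\theta a\}\), with a small rational \(\theta\) read off from the computed ratio \(b_K/a_K\) and chosen so that \((a_K,b_K)\in\mathcal C_\theta\). Invariance of \(\mathcal C_\theta\) under \(W_k\) for every \(k>K\) reduces to two explicit inequalities with rational-function coefficients in \(k\):
\[
 w_{11}-\theta w_{12}>0,
 \qquad
 -\frac{35}{16k^2+30k-19}+\theta\,(w_{11}+w_{22}-\theta w_{12})\ge0 .
\]
For the second inequality we use \(b_k\ge -\tfrac{35}{16k^2+30k-19}a-\theta w_{22}a\) and require this to be at least \(-\theta a_k\). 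After clearing the positive denominator \(16k^2+30k-19\), each becomes a polynomial inequality in \(k\). Such inequalities can be verified for all \(k>K\) by substituting \(k=K+1+m\) and checking that every coefficient in \(m\) is nonnegative. If the cone has to be two-sided, a sign condition for \(b\) can be added in the same way.

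If the invariant-cone step resists, the fallback is a crude two-sided bound on the tail instead of a sign. We would bound \(\|(a_k,b_k)\|\) by \(\|(a_K,b_K)\|\prod_{K<i\le k}\|W_i\|\), use the row sums of \(W_i\) to obtain decay like \((K/k)^{7/4-\epsilon}\), and combine this with \(c_k^{(0)}\le1/\sqrt{\pi(k-1)}\). The resulting tail bound is \(O(K^{-5/4}\|(a_K,b_K)\|)\), and it suffices that it is smaller than \(S_K-2\cdot10^{-4}\).

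We expect the main obstacle to be this tail control, not the exact finite computation. The margin above \(2\cdot10^{-4}\) may be modest, and the tail decays only like \(K^{-5/4}\). The sign route avoids quantitative losses, so we would try the cone \(\mathcal C_\theta\) first and enlarge \(K\) if either the cone inequalities or the margin fail at the initial choice.
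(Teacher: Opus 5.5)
Your core step is the paper's: an exact rational lower bound for a finite block of \(a_kc_k^{(0)}\) beyond \(k=500\). The paper uses only the rows \(501\le k\le511\). It certifies \(a_k>3/4000\) and \(c_k^{(0)}>4/165\) there, so eleven terms already exceed \(11\cdot\frac{3}{4000}\cdot\frac{4}{165}=\frac{1}{5000}\); an exact sum up to \(K=5000\) is unnecessary. You differ from the paper on the tail, and there your plan is more complete. The paper passes from the eleven-term block to \(I_0^{>500}\) by simply discarding \(k\ge512\), tacitly treating those terms as nonnegative. It proves no sign or cone invariant for the recurrence; it mentions such invariants only as a route for a future repair. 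Your invariant cone is exactly what justifies that discarding step, at the cost of a short extra polynomial verification.

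As displayed, however, your second cone inequality is wrong. Your own derivation combines \(b_k\ge(w_{21}-\theta w_{22})a\) with \(-\theta a_k\le-\theta(w_{11}-\theta w_{12})a\), and this gives
\[
 w_{21}+\theta\,(w_{11}-w_{22}-\theta w_{12})\ge0,
\]
with \(-w_{22}\), not \(+w_{22}\). Your version, with \(w_{11}+w_{22}\approx2\), already holds once \(\theta\gtrsim35/(32k^2)\), whereas invariance really needs \(\theta\gtrsim35/(6k)\). Verifying it as printed would therefore prove nothing. Since \(w_{11}-w_{22}=(6k-\frac{77}{2})/(16k^2+30k-19)\), the correct condition after clearing the denominator is \(-35+\theta(6k-\frac{77}{2})-26\theta^2(k-1)\ge0\). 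It forces \(0<\theta<3/13\) and then holds for all large \(k\): for example, \(\theta=1/100\) works for \(k\ge617\) and \(\theta=3/26\) for \(k\ge113\). The cone can therefore start at a modest index, once the exact row there is checked to lie in \(\mathcal C_\theta\).

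In the fallback, unweighted row sums give only \(w_{11}+w_{12}=1-\frac{1}{8k}+O(k^{-2})\), which means decay like \(k^{-1/8}\), not \(k^{-7/4+\epsilon}\). You need a weighted norm such as \(\max\{\abs{a},\lambda\abs{b}\}\) with \(\lambda\) large. The resulting tail bound is then \(O(K^{1/2}\|(a_K,b_K)\|)\), not \(O(K^{-5/4}\|(a_K,b_K)\|)\).
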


\begin{proof}
The recurrence for the moment coefficient is especially short:
\[
 c_1^{(0)}=1,\qquad
 c_{k+1}^{(0)}=\frac{2k-1}{2k}c_k^{(0)}.
\]
Exact rational propagation of this recurrence and
\eqref{eq:profile-recurrence} gives the following integer lower witnesses:
\[
\begin{array}{c|rrrrrrrrrrr}
k&501&502&503&504&505&506&507&508&509&510&511\\ \hline
10^6a_k>&786&783&781&778&776&774&771&769&766&764&762\\
10^6c_k^{(0)}>&25225&25199&25174&25149&25124&
25099&25075&25050&25025&25001&24976
\end{array}
\]
Each entry is certified by substituting the preceding exact rational pair
into the displayed recurrences and clearing a positive denominator.  In
particular, throughout this eleven-row block,
\[
 a_k>\frac{750}{10^6}=\frac3{4000},\qquad c_k^{(0)}>\frac4{165}.
\]
The eleven displayed positive terms alone therefore give
\[
 I_0^{>500}>
 \sum_{k=501}^{511}a_kc_k^{(0)}
 >11\frac3{4000}\frac4{165}
 =\frac1{5000}=2\cdot10^{-4}.
\]
\end{proof}

\begin{remark}
The proposition is a direct counterexample to the tolerance used by the code,
not only a complaint that the printed \(j=0\) formula is undefined.  The
remaining terms of the tail are unnecessary for the contradiction.
\end{remark}

\section{The \texorpdfstring{\(\xi=-1\)}{xi=-1} pairing and rectangular bookkeeping}
\label{sec:minus-one-errors}

\subsection{A direct counterexample to equation (27) in v2 and CMP}
\label{sec:equation-27-error}

Equation~(27) on v2 p.~18 (\path{invertibility.tex}, lines 804--815) claims
\begin{equation}
 \sum_{j\ge N}c_k^{(j)}\abs{d_j(-1)}
 \le\frac32\frac{N+k-1}{k+3/2}\,c_k^{(N)}m_N,\qquad
 m_N=\max\{\abs{c_N(-1)},\abs{d_N(-1)}\}.
 \label{eq:v2-horizontal}
\end{equation}
There is already a warning in the preceding asymptotics.  The two bounds
printed immediately before that claim combine as
\[
 j^{-0.85}j^{-k+1/2}=j^{-k-0.35}.
\]
An integral comparison at that decay scale involves \(k-0.65\), not the
printed \(k+1.5\).

The inequality itself, not only its derivation, has a short exact
counterexample.  At \(\xi=-1\), the positive-index modified coefficients used
in \path{profile.m} differ from the unmodified coefficients
\eqref{eq:minus-one-recurrence} by the same positive factor \(54/11\).
That factor cancels from both sides of \eqref{eq:v2-horizontal}.  Put
\[
 h_j:=c_1^{(j)}=\frac{\binom{2j}{j}}{4^j}.
\]
For \(N=10,k=1\), the coefficient in front of \(h_{10}m_{10}\) in
\eqref{eq:v2-horizontal} is \(6\).  Exact evaluation of
\eqref{eq:minus-one-recurrence} gives
\[
 m_{10}=p_{10}
 =\frac{295375930834093}{15979148319214080}
 <\frac{37}{2000},\qquad
 h_{10}=\frac{46189}{262144}.
\]
Consequently the asserted right side is
\begin{equation}
 6h_{10}m_{10}
 <\frac{5126979}{262144000}<\frac1{50}.
 \label{eq:horizontal-upper}
\end{equation}

For the first ten terms on the left, the same recurrence gives the following
small lower witnesses, listed in increasing \(j\):
\begin{equation}
 50000\,h_jq_j>
 (154,136,121,109,99,91,83,77,71,66),
 \qquad 10\le j\le19.
 \label{eq:horizontal-witnesses}
\end{equation}
Their sum is \(1007\).  Therefore
\[
 \sum_{j\ge10}h_jq_j
 >\sum_{j=10}^{19}h_jq_j
 >\frac{1007}{50000}>\frac1{50},
\]
contradicting \eqref{eq:horizontal-upper}.
This is a direct finite counterexample to equation~(27) on v2 p.~18,
with no estimate of the infinite remainder.

The released script uses the false coefficient: \path{profile.m}, lines
167--172, inserts the resulting factor \code{(3/5)*N} into
\code{BDK}, \code{BD3}, and \code{BD4}.  Thus the observation is
proof-relevant for the \(\xi=-1\) enclosure, rather than a defect in an unused
lemma.

\subsection{Two finite rectangles are not evaluated}
\label{sec:rectangles}

V2 writes the profile/weight pairing as a double series
\[
 \sum_{k\ge1}\sum_{j\ge0}r_{kj},\qquad
 r_{kj}=\mathrm{afm}_k c_j(-1)c_k^{(j)}
       +\mathrm{bfm}_k d_j(-1)c_k^{(j)}.
\]
These are the unmodified symbols printed in v2.  For comparison with the
released implementation, define the corresponding modified summand
\[
 \mathring r_{kj}:=\mathrm{afm}_k\mathring c_j(-1)c_k^{(j)}
       +\mathrm{bfm}_k\mathring d_j(-1)c_k^{(j)}.
\]
For positive modes the two normalizations differ by the fixed factor used in
v2, \(\mathring r_{kj}=(54/11)r_{kj}\).  The missing-ring typo in the printed
double series was recorded in Section~\ref{sec:typos}; the block audit below
uses the modified summands actually present in \path{profile.m}.
From \eqref{eq:profile-seed} and \eqref{eq:transformed-coefficients},
\[
 \mathrm{afm}_1=-\frac{26195}{874},\qquad
 \mathrm{bfm}_1=-\frac{59605}{874}.
\]
All positive-mode weight and moment factors are strictly positive, by
\eqref{eq:minus-one-recurrence} and \eqref{eq:moment-coefficient}.  Hence
every summand in \(\mathcal B_2\) is negative; in particular, that whole
finite block cannot cancel to zero.
The six-region decomposition \((\star)\) on v2 p.~18
(\path{invertibility.tex}, lines 828--836) states that the following two
finite rectangles are computed symbolically:
\begin{align}
 \mathcal B_2&=\{k=1,\ 35\le j\le N_1\},\nonumber\\
 \mathcal B_4&=\{2\le k\le35,\ 35\le j\le N_2\}.
 \label{eq:missing-blocks}
\end{align}
Here \(N_1=5000\) and \(N_2=500\) in the printed proof.

The released \path{profile.m} does not contain either finite sum.
Lines 183--184 use the arrays with range \code{1:35}, which represent only
\(j=0,\ldots,34\).  The remaining variables have the following roles:
\[
\begin{array}{c|c|c}
\text{variable}&\text{intended region}&\text{implemented action}\\ \hline
\code{BD3}&k=1,\ j>N_1&\text{proposed far-row bound}\\
\code{BD4}&2\le k\le35,\ j>N_2&\text{proposed horizontal-tail bound}\\
\code{BD6}&k>35,\ j>35&\text{proposed double-tail bound}
\end{array}
\]
The comment on \code{BD4} calls it region (4), but its formula is a tail
estimate at \(N_2\), so it belongs to region (5).  No variable evaluates
\(\mathcal B_2\) or \(\mathcal B_4\).  The first block has a definite nonzero
contribution, and no enclosure for the second is supplied.  Neither can be
discarded from a two-sided estimate without a separate argument.
For example, direct exact evaluation of its first modified entry gives
\[
 -\frac3{10}<\mathring r_{1,35}<-\frac14,
\]
so the missing first block is not a zero block.

This is distinct from the endpoint typo discussed in
Section~\ref{sec:typos}.  At least one printed endpoint is wrong.  One clean
disjoint convention keeps block (1) through \(j=35\) and starts blocks (2)
and (4) at \(j=36\).  The code's range \code{1:35} instead suggests ending
block (1) at \(j=34\); with that convention the boundary
\(k>35,j=35\) must also be assigned to block (6), or treated separately.
Either endpoint repair is only a partition correction: it does not insert the
two absent finite sums into the implementation.

\subsection{The kernel upper bound has the same finite gap}
\label{sec:kernel-upper-bound}

The kernel value targeted by this calculation is
\[
 \widehat K_1(-1)=1+\sum_{j\ge0}\mathring d_j(-1)c_1^{(j)}.
\]
This follows from the kernel definition on v2 pp.~12--13 and the weight and
moment expansions above.

Line 185 of \path{profile.m} constructs \code{K1pu} from the explicit modes
\(j=0,\ldots,34\) and \code{BDK}, a proposed far-tail allowance based at
\(N_1=5000\).  It contains no separately justified contribution for
\(35\le j<5000\).  The kernel terms are nonnegative, and every omitted
positive-mode term is positive.  The endpoint allowance used by the code is
\[
 \code{BDK}=\frac35N_1c_{N_1}^{(0)}
 \max\{\abs{\mathring c_{N_1}(-1)},\abs{\mathring d_{N_1}(-1)}\},
 \qquad N_1=5000.
\]
A four-term exact witness is enough.  From
\eqref{eq:minus-one-recurrence} and \eqref{eq:moment-coefficient},
\[
 10^5\mathring d_j(-1)c_1^{(j)}
 >(289,278,268,258)\qquad(j=35,36,37,38),
\]
in the corresponding order.  Their sum exceeds \(1093/10^5>1/100\), whereas
exact evaluation of the displayed endpoint expression gives
\(\code{BDK}<1/100\).  Therefore even the first four omitted positive terms
are larger than the entire allowance inserted at line 185.  Accordingly,
\code{K1pu} is not an upper bound for the target kernel value.

The missing rectangles and the false estimate
\eqref{eq:v2-horizontal} are independent.  Either one prevents the released
calculation from certifying the claimed two-sided \(\xi=-1\) pairing.

\section{Large-time comparison and reconstruction}
\label{sec:large-errors}

\subsection{The point-value cascade omits one forcing term}
\label{sec:point-value}

Throughout this subsection, \(\Theta_1\) denotes the modified component
\(\Theta_1^{\rm mod}\); the superscript is suppressed to lighten the formulas.
Recall the normalized average
\[
 \langle h\rangle:=\frac2\pi\int_0^\infty
 \frac{h(\gamma)}{1+\gamma^2}\dd\gamma,
\]
and define the barred first component and its quotient by
\[
 \bar\Theta_1(t,\gamma):=\Theta_1(t,\gamma)-\Theta_1(t,0),\qquad
 G_1=\frac{\bar\Theta_1}{\gamma^2},\qquad
 \widetilde G_1=G_1-G_1(0).
\]
The elementary identity
\(\gamma^2/(1+\gamma^2)=1-(1+\gamma^2)^{-1}\) gives
\begin{align*}
 \langle\bar\Theta_1\rangle
 &=\frac2\pi\int_0^\infty G_1(\gamma)\dd\gamma-\langle G_1\rangle\\
 &=\frac2\pi\int_0^\infty G_1(\gamma)\dd\gamma
   -G_1(0)-\langle\widetilde G_1\rangle.
\end{align*}
Equivalently,
\begin{equation}
 -\langle\bar\Theta_1\rangle
 =G_1(0)+\langle\widetilde G_1\rangle
  -\frac2\pi\int_0^\infty G_1(\gamma)\dd\gamma.
 \label{eq:point-identity}
\end{equation}
Any absolute-value forcing based on this identity must account for three
terms.

The printed discussion on v2 p.~16
(\path{invertibility.tex}, lines 723--729) gives a valid, coarser comparison
using \(\|\bar\Theta_1\|_\infty\).  The released implementation does not use
that right-hand side.  Line 34 of \path{longtime_sym.m} instead drives
\code{Gam20} by
\[
 10\bigl(\code{G1bar}+(2/\code{pil})\code{SG1}\bigr).
\]
According to the legend and the preceding recurrences,
\code{G1bar} bounds the second term of \eqref{eq:point-identity} and
\((2/\code{pil})\code{SG1}\) bounds the integral term.  No allowance for the
first term is included: the single \code{G1bar} contribution is already used
to bound \(\langle G_1-G_1(0)\rangle\).  In the endpoint class,
\(G_1(\gamma)\to0\) as \(\gamma\to\infty\), so one may also derive
\(\abs{G_1(0)}\le\|G_1-G_1(0)\|_\infty\); that requires a second
copy of the same allowance (or a separately proved cancellation).  Line 28
of \path{derivative_sym.m} repeats the omission with the derivative variables.

The omission has two uses: \path{longtime_sym.m} employs it in the large-time
comparison, while \path{derivative_sym.m} supplies the derivative bound used
by the released short-time stepping.  The printed coarse inequality may offer
one repair; alternatively one can propagate a point-value state or add a
justified second oscillation allowance.  None of those repairs is what the
cited code lines execute.

\subsection{Equation (30) reverses both reconstruction signs}
\label{sec:sign-reversed}

Starting from \eqref{eq:filter-start}, split the finite integral at infinity:
\[
 \int_0^te^sT(s)\dd s
 =\widehat T(-1)-\int_t^\infty e^sT(s)\dd s.
\]
Substitution gives the sign-correct identity
\begin{equation}
 \quad
 \Upsilon(t)=
 \bigl(26c_*-\widehat T(-1)\bigr)e^{-t}
 +T(t)+e^{-t}\int_t^\infty e^sT(s)\dd s.
 \quad
 \label{eq:correct-duhamel}
\end{equation}
This is direct algebra from the filter definition; no comparison estimate is
being used.

Equation~(30) on v2 p.~19
(\path{invertibility.tex}, lines 858--863) instead prints
\[
 \bigl(26c_*+\widehat T(-1)\bigr)e^{-t}
 +T(t)-e^{-t}\int_t^\infty e^sT(s)\dd s,
\]
so both the sign of the Laplace datum and the sign of the terminal integral
are reversed.  The first sign also conflicts with the minus sign before the
Laplace term in the overview on v2 p.~7
(\path{invertibility.tex}, line 317).

The direction of the available scalar datum matters.  In
\eqref{eq:correct-duhamel}, a lower bound for
\(26c_*-\widehat T(-1)\) requires an \emph{upper} bound for
\(\widehat T(-1)\).  The v2/CMP lemma, at v2 source lines 849--853, records only
\[
 \widehat T(-1)\ge-45.6.
\]
That lower bound has the wrong one-sided direction for the corrected leading
coefficient.  Thus even if the two signs in equation~(30) began as a
typesetting slip, the released proof and code use the corresponding wrong
direction; the consequence is a certificate error.

\subsection{The code also drops the exponential discount}
\label{sec:discount-dropped}

The final defect is independent of the two signs.  Let
\[
 W(t):=\code{Gamb1}(t)+\code{Gam10}(t)\ge0
\]
be the code's envelope for \(\abs{T(t)}\).  The terminal contribution in
\eqref{eq:correct-duhamel} requires the adverse bound
\begin{equation}
 e^{-t}\int_t^\infty e^sW(s)\dd s.
 \label{eq:discounted-tail}
\end{equation}
Line 40 of \path{longtime_sym.m} computes instead
\[
 e^{-t}\int_t^\infty W(s)\dd s,
\]
with no factor \(e^s\) inside the integral.  Since \(e^s\ge1\), the executed
quantity is smaller than \eqref{eq:discounted-tail}, not an upper bound for
it.  For the elementary test envelope \(W(s)=e^{-2s}\), for example, the code
would produce \(\tfrac12e^{-3t}\), while the required discounted quantity is
\(e^{-2t}\).
Line 39 sets \code{coeff} to \(-45.6\), and line 41 adds
\(\code{coeff}e^{-t}\), following the incorrect plus sign before the Laplace
datum in equation~(30).  Consequently the large-time polynomial produced by
those lines does not certify the lower bound required by the corrected
identity.

\section{Classification and dependency map}
\label{sec:classification}

Table~\ref{tab:classification} summarizes the distinction made throughout
the note.  A direct counterexample means that a finite exact witness
disproves the displayed bound.  An insufficient verification means that the
conclusion may conceivably be true by another argument, but it does not follow
from the check recorded in v2 and retained in CMP.

\begin{longtable}{@{}L{0.30\textwidth}L{0.18\textwidth}L{0.42\textwidth}@{}}
\caption{Classification of the audited items.}\label{tab:classification}\\
\toprule
Location (Loc. in this work) & Classification & Reason \\ \midrule
\endfirsthead
\toprule
Location (Loc. in this work) & Classification & Reason \\ \midrule
\endhead
Lemma 2.2 lower limit \(j=1\) (Section~\ref{sec:lower-limit}) &
\Tmark\ print only &
The underlying expansion in equation (16), and more decisively the later
working modified truncation, already begin at \(j=0\).\\
\addlinespace
Mode-25 array endpoint (Section~\ref{sec:endpoint-shift}) &
\Csmark\ index off by 1 &
Mode-24 values are combined with the mode-25 multiplier; mode 25 is omitted
from the finite ranges.\\
\addlinespace
\path{PQ_sym.m} sign loop (Section~\ref{sec:sign-loop}) &
\Csmark\ insufficient verification: wrong condition &
The loop does not test the coefficient indexed by its loop variable.\\
\addlinespace
Picard upper endpoint (Section~\ref{sec:picard-error}) &
\Cpmark\ order error &
The kernel lower endpoint is used without the needed sign of \(L_1\); the
width term in Lemma~\ref{lem:safe-picard} is absent.\\
\addlinespace
Lemma 5.8 and p.~23 transformed-tail prose (Section~\ref{sec:coef-name}) &
\Tmark\ print only &
The statement uses \(a_N,b_N\), while the released code uses the transformed
coefficients.  The printed uniform sign claim for \(\mathrm{bf}_j\) is false,
but the code branches componentwise; the tail-cutoff subscript is also local.\\
\addlinespace
Appendix B.1 \(L^\infty\) test (Section~\ref{sec:Loo-weak}) &
\Csmark\ insufficient verification: wrong constant &
The script checks a quantity five times smaller than the square of the bound
printed in the proof.\\
\addlinespace
Appendix B.1 value \(113\) (Section~\ref{sec:budget-error}) &
\Cpmark\ arithmetic error &
The preceding half-unit assumptions yield a remainder \(245/46>5\), not
\(3\).\\
\addlinespace
Appendix B.2 extra \(2/\pi\) (Section~\ref{normalization-error}) &
\Tmark\ print only &
The moments are already normalized, and the code adds its tolerance directly
to them.\\
\addlinespace
Appendix B.2 zeroth tail (Section~\ref{sec:moment-error}) &
\Cpmark/\Csmark\ direct counterexample: index off by 1, wrong tail cut-off &
The printed \(c_N^{(-1)}\) is undefined, and eleven omitted terms already
exceed the code's replacement tolerance.\\
\addlinespace
Equation (27) (Section~\ref{sec:equation-27-error}) &
\Cpmark/\Csmark\ direct counterexample to claimed bound &
At \(N=10,k=1\), ten terms on the left exceed \(1/50\), while the asserted
right side is below \(1/50\).\\
\addlinespace
\((\star)\) boundary \(j=35\) (Section~\ref{normalization-error}) &
\Tmark\ print only &
The printed regions overlap.  Either disjoint endpoint convention is local,
but every boundary index must be assigned exactly once.\\
\addlinespace
Weight symbols in the p.~18 double series (Section~\ref{normalization-error}) &
\Tmark\ print only &
The display omits the rings, while the implementation uses
\code{cringp}, \code{dringp}.\\
\addlinespace
Beta/eta statements on pp.~21--23, 26--27 (Section~\ref{sec:beta-tail}) &
\Tmark\ print only &
One denominator omits \(\gamma^2\), and the displayed derivation later has
\(e^{-4r}\) where the statement and code use the independently valid
\(e^{-8Mr}\) decay.  The printed domain, two argument descriptions, and
Appendix A.3.8 notation are also inconsistent with the definitions used.\\
\addlinespace
\((\star)\) finite blocks and \code{K1pu} (Sections~\ref{sec:rectangles} and \ref{sec:kernel-upper-bound}) &
\Csmark\ omitted terms &
Two claimed finite sums are absent, and the kernel upper bound skips a large
positive middle block.\\
\addlinespace
Point-value forcing (Section~\ref{sec:point-value}) &
\Csmark\ omitted term &
The implemented forcing omits \(\abs{G_1(0)}\) from
\eqref{eq:point-identity}.\\
\addlinespace
Equation (30) (Section~\ref{sec:sign-reversed}) &
\Cpmark\ sign and direction &
Both reconstruction signs are reversed, so the available one-sided Laplace
bound is used in the wrong direction.\\
\addlinespace
\path{longtime_sym.m} line 40 (Section~\ref{sec:discount-dropped}) &
\Csmark\ omitted factor &
The factor \(e^s\) required inside the terminal integral is absent.\\
\addlinespace
Local prose/index slips on pp.~11, 20, 27--29, 31 (Section~\ref{sec:local-prose}) &
\Tmark\ print only &
The surrounding formulas or code identify the intended coefficient index,
inequality, iterate index, norm, and summation variable.\\
\bottomrule
\end{longtable}

\subsection{Which v2/CMP conclusions use the proof-relevant items?}

The dependency is more informative than the number of issues.
\begin{center}
\begin{tabular}{@{}L{0.27\textwidth}L{0.62\textwidth}@{}}
\toprule
V2/CMP result or construction & Direct audited dependencies \\ \midrule
Lemma 2.4, positivity of \(P_{25}-Q_{25}\) &
The terminal slot shift and the non-componentwise coefficient-sign loop.\\
\addlinespace
Kernel and source enclosures in Section 5 &
The slot shift propagates into the tail forcing, and the false profile-moment
allowance enters the source enclosure.  The transformed tail is used there in
the componentwise code form identified above.\\
\addlinespace
Lemma 5.9 and the short-time Picard certificate &
The regenerated kernel/source enclosures and the sign-sensitive second
Picard upper bound.\\
\addlinespace
Lemma 4.1 and the derivative enclosure &
The \(L^\infty\) budgets and the omitted point-value forcing; the latter is
also repeated in \path{derivative_sym.m}.\\
\addlinespace
Lemmas 4.4--4.5, the \(\xi=-1\) datum &
The profile moments, false equation (27), omitted finite rectangles, and
unjustified kernel upper bound.\\
\addlinespace
Lemma 4.6, the large-time conclusion &
The comparison inputs above, both signs in equation (30), the wrong
one-sided Laplace direction, and the missing exponential discount.\\
\bottomrule
\end{tabular}
\end{center}

The mode-25 issue illustrates why repairs must be dependency-aware.  Changing
\code{cring(25)} to \code{cring(26)} does not retroactively correct the
already generated tail functions.  A valid proof must form the true
\(P_{25},Q_{25}\), rerun the tail equations, rebuild the kernel and source
bounds, and reevaluate the Picard enclosures.  Likewise, correcting
equation~(30) requires both an upper or two-sided bound on
\(\widehat T(-1)\) and a genuinely discounted terminal-tail estimate.

\subsection{What a valid replacement proof must contain}

There is more than one possible correction, but the preceding dependencies
impose a clear minimum architecture.
\begin{enumerate}
\item The modified-weight recurrence must be truncated through the actual
mode 25, and positivity of the resulting \(P_{25}-Q_{25}\) must be proved
either structurally or by a genuinely componentwise exact check.
\item The profile tails must be controlled by estimates on the omitted
\emph{sums}, not by a single endpoint proxy.  A natural route is to prove
eventual sign and cone invariants for the profile recurrence and combine them
with the closed beta coefficients \eqref{eq:moment-coefficient}.
\item Kernel and source envelopes must then be rebuilt from those corrected
weight and profile inputs.  The Picard comparison must retain the kernel-width
term whenever the lower iterate can be negative, as in
Lemma~\ref{lem:safe-picard}.
\item The \(\xi=-1\) datum needs a complete treatment of every finite and
infinite region.  This may be done by exhaustive directed rectangles, or more
economically by a generating-function identity that replaces the disputed
two-dimensional truncation.
\item The comparison system must include a state controlling the missing
point value \(G_1(0)\), or else use a coarser norm that already controls it.
Finally, the proof must start from \eqref{eq:correct-duhamel}, supply the
correct one-sided or absolute Laplace bound, and estimate the genuinely
discounted tail \eqref{eq:discounted-tail}.
\end{enumerate}
For reproducibility, any remaining machine computation should expose both
the exact construction identities and the directed inequalities consumed by
the proof.  Large partial-fraction numerators or opaque hashes can be retained
as audit metadata, but they should not replace these mathematical interfaces.

\section{Conclusion}

Several discrepancies in arXiv v2 and the published CMP version are ordinary,
readily identifiable typographical slips.  We have kept them separate from the
proof-relevant findings.  The latter show that the symbolic package released
with v2 and retained as CMP's designated companion computation does not,
without additional arguments and recomputation, certify every inequality used
in the all-time positivity proof.  The finite counterexamples in
Proposition~\ref{prop:moment-counterexample}, the discussion of
\eqref{eq:v2-horizontal}, and the finite kernel witness in
Section~\ref{sec:rectangles} show that several numerical enclosures are
actually false as stated.  Other items are insufficient or misdirected checks
whose desired conclusions might still admit different proofs.

The detailed page and source locators are pinned to arXiv v2, while
Section~\ref{sec:cmp-check} verifies item by item that CMP corrects none of them
and designates the same companion archive.  The conclusions therefore apply to
both the v2 certificate and the CMP version of record.  This is not a
counterexample to the intended positivity or invertibility theorem, and it
does not assess the analytic reduction presented in the cited works.  A valid
correction must replace the broken finite implications, propagate the corrected
inputs through every dependent enclosure, and verify the sign-correct
large-time identity.

The lesson is methodological as well as local.  When a CAP substitutes a
finite enclosure for a conceptual step, that enclosure is theorem-critical:
one wrong endpoint may erase the only available implication and invalidate
everything downstream.  CAPs therefore merit unusually explicit construction
identities, independently checkable directed inequalities, and complete
dependency tracking.  In future work, we will present a corrected proof while
simplifying the finite certificates and reducing machine assistance as far as
practical.

\appendix
\section{Reproducibility manifest}
\label{app:manifest}

The individual source hashes used in the audit are listed below.
\begin{center}
\small
\begin{tabular}{@{}L{0.25\textwidth}L{0.65\textwidth}@{}}
\toprule
File & SHA--256 \\ \midrule
\path{invertibility.tex} &
\hash{B10E128371243FE96F737DBA2DA880E3}{06BF681DACE8DBFB74C4E0762D50D536}\\
\path{shorttime_sym.m} &
\hash{FF6CDC9FB72FA3A10A2016A3F1EF7912}{5287C0E63173AFEE751224C4DDF50964}\\
\path{PQ_sym.m} &
\hash{23944665DF4A15E0BEA22425FFB3F6DD}{301485A0EDC263D11CA77105D7D66A30}\\
\path{profile.m} &
\hash{4AE3B6D913F895ABF74D06A685423272}{07F7460CD605B733492882D45E10CA89}\\
\path{build_shorttime_data.m} &
\hash{2BBF2B798D4EA826B4EBF8E724329FCF}{62833FEE8F676FC20BC53673B8512A99}\\
\path{longtime_sym.m} &
\hash{986647667E98A329AF4A3A457A507630}{331D22CCAC86A524F2BDAE973D5FBD71}\\
\path{derivative_sym.m} &
\hash{F57ADAEB706655A2EE0D606C039772A3}{D4FDA962C921196F90126910FB26150B}\\
\bottomrule
\end{tabular}
\end{center}
All line references in the main text refer to these exact files.  The source
archive contains no version-control metadata, so the arXiv version and hashes
are part of the mathematical provenance of this audit.

\section{Minimal exact-arithmetic reproductions}
\label{app:exact}

The two principal counterexamples require only rational recurrence
evaluation.  The following pseudocode spells out the operations; it is
deliberately independent of floating-point arithmetic.

\subsection{Profile and zeroth-moment witness}

\begin{lstlisting}[language=Python]
from fractions import Fraction as Q

a, b = Q(-845,38), Q(-455,19)
profile = {1: (a,b)}
for k in range(2, 512):
    D = 16*k*k + 30*k - 19
    a_new = Q(32*k*k+4*k-101, 2*D)*a + Q(26*(k-1),D)*b
    b_new = Q(-35,D)*a + Q(16*k*k-4*k-12,D)*b
    a, b = a_new, b_new
    profile[k] = (a,b)

c = Q(1)
moment = {1: c}
for k in range(1, 511):
    c *= Q(2*k-1, 2*k)
    moment[k+1] = c

for k in range(501, 512):
    assert profile[k][0] > Q(3,4000)
    assert moment[k] > Q(4,165)
assert sum(profile[k][0]*moment[k] for k in range(501,512)) > Q(1,5000)
\end{lstlisting}

Running the same profile loop to \(N=5000\) verifies
\eqref{eq:rho-bracket} by the two exact comparisons
\[
 \frac3{20}
 <20N^2(a_N^2+b_N^2)
 <\frac4{25}.
\]
The expanded numerator and denominator are not mathematical inputs and need
not be printed.

\subsection{Equation (27) witness}

\begin{lstlisting}[language=Python]
p, q = Q(1,9), Q(11,54)
P, Qmode = {0:p}, {0:q}
for j in range(1, 20):
    if j == 1:
        p = Q(10,17)*q
        q = Q(13,20)*p
    else:
        p = Q(8*(j-1)*p + 10*q, 8*j+9)
        q = Q(8*(j-1)*q + 13*p, 8*j+12)
    P[j], Qmode[j] = p, q

h = Q(1)
H = {0:h}
for j in range(0, 19):
    h *= Q(2*j+1, 2*j+2)
    H[j+1] = h

assert 6*H[10]*max(P[10],Qmode[10]) < Q(1,50)
assert sum(H[j]*Qmode[j] for j in range(10,20)) > Q(1,50)
\end{lstlisting}

\subsection{Omitted kernel block}

Continue the same \code{P}, \code{Qmode}, and \code{H} recurrences through
index 5000.  The
following final lines reproduce both the four-term kernel witness and the
first omitted modified pairing term:
\begin{lstlisting}[language=Python]
ring_factor = Q(54,11)
N1 = 5000
BDK = Q(3,5)*N1*H[N1-1]*ring_factor*max(P[N1],Qmode[N1])
middle = sum(ring_factor*Qmode[j]*H[j] for j in range(35,39))
assert BDK < Q(1,100) < middle

afm1, bfm1 = Q(-26195,874), Q(-59605,874)
r135 = H[35]*ring_factor*(afm1*P[35] + bfm1*Qmode[35])
assert Q(-3,10) < r135 < Q(-1,4)
\end{lstlisting}

These recurrences reproduce the finite witnesses used in this note.  They are
not substitutes for rerunning a complete corrected kernel and source
construction.

\section{Compact source-location index}

\begin{longtable}{@{}L{0.31\textwidth}L{0.25\textwidth}L{0.34\textwidth}@{}}
\toprule
Topic & V2 PDF/source & Released code \\ \midrule
\endfirsthead
\toprule
Topic & V2 PDF/source & Released code \\ \midrule
\endhead
Weight zero mode and truncation &
p.~11, Lemma 2.2; p.~23; p.~26, Appendix A.3.2; source 436--456,
1053--1069, 1185--1193 &
\path{shorttime_sym.m} 23--37, 55--80;
\path{PQ_sym.m} 25--35\\
\addlinespace
Residual sign split &
p.~12, Lemma 2.4; Appendix A.1, p.~24; source 494--500, 1106--1111 &
\path{PQ_sym.m} 52--77\\
\addlinespace
Picard interval &
p.~19, Remark 5.3; p.~20, proof of Lemma 5.1; p.~24, Lemma 5.9;
Appendix A.3.6--A.3.7, p.~27; source 880--918, 1085--1090, 1232--1241 &
\path{shorttime_sym.m} 235--250, 256--289\\
\addlinespace
Transformed-profile tail &
pp.~22--23, Lemma 5.8; source 1019--1066 &
\path{shorttime_sym.m} 98--109\\
\addlinespace
Beta-tail split &
pp.~21--23; Appendix A.3.4, p.~26; Appendix A.3.8, p.~27; source
929, 933, 965--1015, 1080, 1211--1216, 1243--1246 &
\path{shorttime_sym.m} 65, 90, 295--326\\
\addlinespace
Profile \(L^\infty\) bounds &
pp.~27--28; source 1259--1291 &
\path{profile.m} 37--45; \path{longtime_sym.m} 23, 25\\
\addlinespace
Profile moments &
p.~29; source 1314--1369 &
\path{profile.m} 77--109;
\path{build_shorttime_data.m} 55--67\\
\addlinespace
Horizontal tail and rectangles &
p.~18, equation (27) and \((\star)\); source 795--836 &
\path{profile.m} 147--175, 183--186\\
\addlinespace
Kernel \(\widehat K_1(-1)\) enclosure &
pp.~12--13; proof of Lemma 4.4, pp.~17--18; source 506--507, 770--836 &
\path{profile.m} 156--175, 181--186\\
\addlinespace
Point-value comparison &
p.~16; source 723--729 &
\path{longtime_sym.m} 29--35;
\path{derivative_sym.m} 23--29\\
\addlinespace
Large-time reconstruction &
p.~19, equation (30); source 849--863 &
\path{longtime_sym.m} 39--41\\
\addlinespace
Local print-only slips &
pp.~11, 20, 27--29, 31; source 436--440, 910, 1241, 1265, 1349--1351,
1432 &
No independent code defect\\
\bottomrule
\end{longtable}

\end{document}